\tikzset{vertex/.style={circle,draw,fill,inner sep=0pt,minimum size=1mm}}
\newcommand{\yzgrid}[5] 
{
  \foreach \y in {#2,...,#3} {
      \draw[white,line width=3pt] (\y,#4,#1) -- (\y,#5,#1);
  }
  \foreach \z in {#4,...,#5} {
      \draw[white,line width=3pt] (#2,\z,#1) -- (#3,\z,#1);
      }
  \foreach \y in {#2,...,#3} {
      \draw[densely dotted] (\y,#4,#1) -- (\y,#5,#1);
  }
  \foreach \z in {#4,...,#5} {
      \draw[densely dotted] (#2,\z,#1) -- (#3,\z,#1);
      }
}
\newcommand{\xyzgrid}[6]
{ 
\foreach \x in {#1,...,#2} {
  \foreach \y in {#3,...,#4} {
      \draw [densely dotted] (\y, #5, \x) -- (\y, #6, \x);
  }
}
\foreach \y in {#3,...,#4} {
    \foreach \z in {#5,...,#6} {
      \draw [densely dotted] (\y, \z, #1) -- (\y, \z, #2);
    }
}
\foreach \x in {#1,...,#2} {
    \foreach \z in {#5,...,#6} {
      \draw [densely dotted] (#3, \z, \x) -- (#4, \z, \x);
    }
}
}
\theoremstyle{plain}
\newtheorem{thm}{Theorem}
\newtheorem{cor}[thm]{Corollary}
\theoremstyle{definition}
\newtheorem{definition}[thm]{Definition}
\numberwithin{thm}{section}
\newcommand{\adj}{\leftrightarrow}
\newcommand{\adjeq}{\leftrightarroweq}
\def\Z{{\mathbb Z}}
\begin{document}
\title{$MSS_{18}$ is Digitally 18-contractible}
\author{Laurence Boxer
\thanks{
    Department of Computer and Information Sciences,
    Niagara University,
    Niagara University, NY 14109, USA;
    and Department of Computer Science and Engineering,
    State University of New York at Buffalo.
    email: boxer@niagara.edu
}
}

\date{ }
\maketitle{}

\begin{abstract}
The paper~\cite{Han06} incorrectly asserts that the digital image $MSS_{18}$,
a digital model of the Euclidean 2-sphere $S^2$, is not 18-contractible. We show this
assertion is false.

Key words and phrases: digital topology, digital image, contractible, fundamental group
\end{abstract}


\section{Introduction}
In digital topology, we often find that properties of a digital image are
analogous to topological properties of an object in Euclidean space modeled by the 
digital image. For example, a digital image that models a contractible object may
have the property of digital contractibility, and a digital image that models a
non-contractible object may have the property of digital non-contractibility.

$MSS_{18}$ is the name often used for a certain digital image that models the
Euclidean 2-sphere $S^2$. S.E. Han has claimed (Theorem~4.3 of~\cite{Han06})
that $MSS_{18}$ is not 18-contractible. We show this assertion is false.

\section{Preliminaries}
Much of this section is quoted or paraphrased from~\cite{BxSt16}.

We use $\Z$ to indicate the set of integers.

\subsection{Adjacencies}
A digital image is a graph $(X,\kappa)$, where $X$ is a subset of $\Z^n$ for
some positive integer~$n$, and $\kappa$ is an adjacency relation for the points
of~$X$. The $c_u$-adjacencies are commonly used.
Let $x,y \in \Z^n$, $x \neq y$, where we consider these points as $n$-tuples of integers:
\[ x=(x_1,\ldots, x_n),~~~y=(y_1,\ldots,y_n).
\]
Let $u \in \Z$,
$1 \leq u \leq n$. We say $x$ and $y$ are 
{\em $c_u$-adjacent} if
\begin{itemize}
\item There are at most $u$ indices $i$ for which 
      $|x_i - y_i| = 1$.
\item For all indices $j$ such that $|x_j - y_j| \neq 1$ we
      have $x_j=y_j$.
\end{itemize}
Often, a $c_u$-adjacency is denoted by the number of points
adjacent to a given point in $\Z^n$ using this adjacency.
E.g.,
\begin{itemize}
\item In $\Z^1$, $c_1$-adjacency is 2-adjacency.
\item In $\Z^2$, $c_1$-adjacency is 4-adjacency and
      $c_2$-adjacency is 8-adjacency.
\item In $\Z^3$, $c_1$-adjacency is 6-adjacency,
      $c_2$-adjacency is 18-adjacency, and $c_3$-adjacency
      is 26-adjacency.
\end{itemize}

We write $x \adj_{\kappa} x'$, or $x \adj x'$ when $\kappa$ is understood, to indicate
that $x$ and $x'$ are $\kappa$-adjacent. Similarly, we
write $x \adjeq_{\kappa} x'$, or $x \adjeq x'$ when $\kappa$ is understood, to indicate
that $x$ and $x'$ are $\kappa$-adjacent or equal.

A subset $Y$ of a digital image $(X,\kappa)$ is
{\em $\kappa$-connected}~\cite{Rosenfeld},
or {\em connected} when $\kappa$
is understood, if for every pair of points $a,b \in Y$ there
exists a sequence $\{y_i\}_{i=0}^m \subset Y$ such that
$a=y_0$, $b=y_m$, and $y_i \adj_{\kappa} y_{i+1}$ for $0 \leq i < m$.

\subsection{Digitally continuous functions}
The following generalizes a definition of~\cite{Rosenfeld}.

\begin{definition}
\label{continuous}
{\rm ~\cite{Boxer99}}
Let $(X,\kappa)$ and $(Y,\lambda)$ be digital images. A single-valued function
$f: X \rightarrow Y$ is $(\kappa,\lambda)$-continuous if for
every $\kappa$-connected $A \subset X$ we have that
$f(A)$ is a $\lambda$-connected subset of $Y$. $\Box$
\end{definition}

When the adjacency relations are understood, we will simply say that $f$ is \emph{continuous}. Continuity can be expressed in terms of adjacency of points:
\begin{thm}
{\rm ~\cite{Rosenfeld,Boxer99}}
A function $f:X\to Y$ is continuous if and only if $x \adj x'$ in $X$ implies 
$f(x) \adjeq f(x')$. \qed
\end{thm}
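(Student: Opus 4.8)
The plan is to prove the two implications separately, using a two-point set as the bridge between the connectivity formulation of continuity in Definition~\ref{continuous} and the pointwise formulation in the statement.

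First I would handle the easy direction. Assume $f$ is continuous in the sense of Definition~\ref{continuous}, and suppose $x \adj x'$ in $X$. The set $A = \{x, x'\}$ is $\kappa$-connected, since the two-term sequence $x, x'$ satisfies the definition of connectedness. By continuity, $f(A) = \{f(x), f(x')\}$ is $\lambda$-connected. But a subset of $Y$ with at most two points is $\lambda$-connected exactly when those points are equal or $\lambda$-adjacent: a singleton is connected via the trivial sequence, and a genuine two-point set is connected only if its points are $\lambda$-adjacent. Hence $f(x) \adjeq f(x')$, as desired.

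For the converse, assume that $x \adj x'$ implies $f(x) \adjeq f(x')$, and let $A \subset X$ be $\kappa$-connected. To show $f(A)$ is $\lambda$-connected I take arbitrary $p, q \in f(A)$, say $p = f(a)$ and $q = f(b)$ with $a, b \in A$. Connectedness of $A$ gives a sequence $a = y_0, y_1, \ldots, y_m = b$ in $A$ with $y_i \adj y_{i+1}$ for $0 \le i < m$. Applying the hypothesis to each consecutive pair yields $f(y_i) \adjeq f(y_{i+1})$, so the sequence $f(y_0), \ldots, f(y_m)$ lies in $f(A)$ and joins $p$ to $q$ with consecutive terms equal or $\lambda$-adjacent.

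The one point that needs care — and it is the only real obstacle here, though a minor one — is the gap between $\adj$ and $\adjeq$: the definition of connectedness demands strict $\kappa$-adjacency between consecutive terms of the witnessing sequence, whereas the mapped sequence may have consecutive terms that coincide. I would resolve this by deleting from $f(y_0), \ldots, f(y_m)$ each term that equals its predecessor; the resulting subsequence still starts at $p$, ends at $q$, stays within $f(A)$, and now has genuinely $\lambda$-adjacent consecutive terms (in the degenerate case $p = q$ one is left with a single-term sequence, which the definition permits with $m = 0$). Since $p$ and $q$ were arbitrary, $f(A)$ is $\lambda$-connected, and as $A$ was an arbitrary $\kappa$-connected subset, $f$ is continuous.
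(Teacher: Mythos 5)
Your proposal is correct: both directions are sound, and you rightly identify and properly resolve the only delicate point, namely collapsing consecutive repeated values in the image sequence so that the witnessing sequence satisfies strict $\lambda$-adjacency (note that two retained consecutive terms are necessarily distinct and hence genuinely adjacent, since every deleted term equals the last retained one). The paper itself states this theorem without proof, citing~\cite{Rosenfeld,Boxer99}; your argument is the standard one found in those sources, so there is nothing further to reconcile.
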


See also~\cite{Chen94,Chen04}, where similar notions are referred to as {\em immersions}, {\em gradually varied operators},
and {\em gradually varied mappings}.

A homotopy between continuous functions may be thought of as
a continuous deformation of one of the functions into the 
other over a finite time period.

\begin{definition}{\rm (\cite{Boxer99}; see also \cite{Khalimsky})}
\label{htpy-2nd-def}
Let $X$ and $Y$ be digital images.
Let $f,g: X \rightarrow Y$ be $(\kappa,\kappa')$-continuous functions.
Suppose there is a positive integer $m$ and a function
$F: X \times [0,m]_{{\Z}} \rightarrow Y$
such that

\begin{itemize}
\item for all $x \in X$, $F(x,0) = f(x)$ and $F(x,m) = g(x)$;
\item for all $x \in X$, the induced function
      $F_x: [0,m]_{{\Z}} \rightarrow Y$ defined by
          \[ F_x(t) ~=~ F(x,t) \mbox{ for all } t \in [0,m]_{{\Z}} \]
          is $(2,\kappa')-$continuous. That is, $F_x(t)$ is a path in $Y$.
\item for all $t \in [0,m]_{{\Z}}$, the induced function
         $F_t: X \rightarrow Y$ defined by
          \[ F_t(x) ~=~ F(x,t) \mbox{ for all } x \in  X \]
          is $(\kappa,\kappa')-$continuous.
\end{itemize}
Then $F$ is a {\rm digital $(\kappa,\kappa')-$homotopy between} $f$ and
$g$, and $f$ and $g$ are {\rm digitally $(\kappa,\kappa')-$homotopic in} $Y$.
$\Box$
\end{definition}

If there is a $(\kappa,\kappa)$-homotopy $F: X \times [0,m]_{\Z} \to X$ between
the identity function $1_X$ and a constant function, we say 
$F$ is a (digital) {\em  $\kappa$-contraction} and $X$ is {\em $\kappa$-contractible}.

\section{Contractibility of $MSS_{18}$}
$MSS_{18}$~\cite{Han06} is a ``small" digital model of the Euclidean 2-sphere~$S^2$,
appearing rather like an American football.
As shown in Figure~\ref{MSS18fig}, we can take $MSS_{18} = \{p_i\}_{i=0}^9$, where
\[ p_0= (0,0,0),~p_1=(1,1,0),~p_2=(1,2,0),~p_3=(0,3,0),~p_4=(-1,2,0),
\]
\[ ~p_5=(-1,1,0),~p_6=(0,1,-1),~p_7=(0,2,-1),~p_8=(0,2,1),~p_9=(0,1,1).
\]

\begin{figure}
\begin{center}
\begin{tabular}{c}
\begin{tikzpicture}
%
\def \pts{0/0/0/p_0,1/1/0/p_1,1/2/0/p_2,0/3/0/p_3,-1/2/0/p_4,-1/1/0/p_5,0/1/-1/p_6,0/2/-1/p_7,0/2/1/p_8,0/1/1/p_9};
\xyzgrid{-1}{1}{0}{3}{-1}{1}
\foreach \x/\y/\z/\l in \pts {
            \node at (\y,\z,\x) [vertex, fill=black, label=$\l$,] {};
}
\end{tikzpicture}%
\end{tabular}
\end{center}
\caption{$MSS_{18}$, a digital model of a 2-sphere (from Figure~2 of~\cite{BxSt16})}
\label{MSS18fig}
\end{figure}
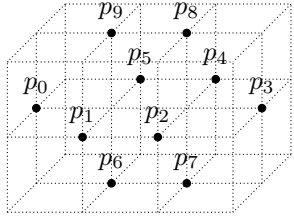

Contrary to the claim of Theorem~4.3 of~\cite{Han06}, we have the following 
Theorem~\ref{MSS18contracts}. Its proof makes use of the contractibility of a
4-point digital simple closed curve~\cite{Boxer99}. Notice that $MSS_{18}$ contains
the 4-point 18- and 26-simple closed curves
\[ S=\{(x,1,z) \in MSS_{18}\}=\{p_1,p_6,p_5,p_9\} \mbox{ and }\]
\[ S'=\{(x,2,z) \in MSS_{18}\} = \{p_2,p_7,p_4,p_8\}.
\]
Roughly, our contraction of $MSS_{18}$ begins by continuously deforming $MSS_{18}$
into a connected subset of $S \cup S'$, after which a contraction is completed.

\begin{thm}
\label{MSS18contracts}
$MSS_{18}$ is 18-contractible and 26-contractible.
\end{thm}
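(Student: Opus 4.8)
The plan is to build an explicit digital homotopy $F\colon MSS_{18}\times[0,m]_{\Z}\to MSS_{18}$ with $F_0=1_{MSS_{18}}$ and $F_m$ constant, realized as a concatenation of the two phases sketched above, and to verify the slice conditions of Definition~\ref{htpy-2nd-def} directly: each $F_t$ continuous, each track $F_x$ moving by at most one step. Since $18$- and $26$-adjacency do \emph{not} agree on the ten points of $MSS_{18}$ (for instance $p_1\adj_{c_3}p_7$ but $p_1\not\adj_{c_2}p_7$), I would check continuity of every $F_t$ for both adjacencies. Because a single step is simultaneously an $18$-step and a $26$-step, the track conditions then hold for both, so one homotopy $F$ will prove $18$- and $26$-contractibility at once; recall also that a concatenation of digital homotopies is again a digital homotopy.

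\emph{Phase 1 (deform into $S\cup S'$).} First I would construct $G\colon MSS_{18}\times[0,2]_{\Z}\to MSS_{18}$ from $1_{MSS_{18}}$ to the self-map $\rho$ defined by $\rho(p_0)=\rho(p_5)=p_1$, $\rho(p_3)=\rho(p_4)=p_2$, and $\rho(p_i)=p_i$ otherwise, so that $\rho(MSS_{18})=\{p_1,p_2,p_6,p_7,p_8,p_9\}$, a connected subset of $S\cup S'$. The point worth noting is that $\rho$ cannot simply fix $S$ and $S'$: no point of $S\cup S'$ is adjacent-or-equal to all four points of $S$, so the pole $p_0$ (whose neighbours are exactly $S$) can be pushed into $S\cup S'$ only if the vertex $p_5$ of $S$ antipodal to $p_1$ is folded inward as well, and symmetrically for $p_3,p_4$. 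The homotopy needs two steps because $p_5$ and $p_4$ each travel graph-distance $2$, and the intermediate map $G_1$ must move several points simultaneously to remain continuous: sliding $p_5$ off its vertex forces a neighbouring vertex of $S$ to move, which forces a neighbour of it in $S'$ to move, and so on. A choice that closes this cascade is $G_1$: $p_0\mapsto p_1$, $p_3\mapsto p_2$, $p_5\mapsto p_6$, $p_4\mapsto p_7$, $p_9\mapsto p_1$, $p_8\mapsto p_2$ (the rest fixed), with the step $1\to 2$ completing the moves of $p_5,p_4$ and returning $p_9,p_8$ home. I expect pinning down $G_1$ and then verifying continuity of $G_1$ and of $\rho$ on all $20$ edges of $MSS_{18}$ together with the eight additional $c_3$-edges to be the main obstacle; everything else is routine bookkeeping.

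\emph{Phase 2 (finish inside $S\cup S'$).} Under either induced adjacency, $Y:=\rho(MSS_{18})$ is the union of the overlapping $4$-point subsets $\{p_1,p_9,p_8,p_2\}$ and $\{p_1,p_6,p_7,p_2\}$ — each a $4$-point simple closed curve for $c_2$-adjacency and a clique for $c_3$-adjacency — glued along the edge $p_1p_2$. I would fold the first subset onto that edge by the continuous self-map of $Y$ sending $p_9\mapsto p_1$, $p_8\mapsto p_2$ and fixing the rest (one step), leaving $\{p_1,p_6,p_7,p_2\}$, which with its induced $c_2$-adjacency is a $4$-point simple closed curve and hence $18$-contractible by~\cite{Boxer99} — explicitly, fold it onto the edge $p_1p_6$ and then collapse that edge, two more steps; the same folds are continuous for $c_3$-adjacency, where $\{p_1,p_6,p_7,p_2\}$ is complete and the collapse is immediate. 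This yields a contraction $K\colon Y\times[0,3]_{\Z}\to Y$.

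\emph{Assembling $F$.} Set $H(x,t):=K(\rho(x),t)$ for $x\in MSS_{18}$; then $H$ is a homotopy from $\rho$ to a constant self-map of $MSS_{18}$, since each $H_t=K_t\circ\rho$ is a composition of continuous maps (for both adjacencies) and each track $H_x(t)=K(\rho(x),t)$ is a path. Concatenating $G$ with $H$ gives a digital homotopy $F\colon MSS_{18}\times[0,5]_{\Z}\to MSS_{18}$ from $1_{MSS_{18}}$ to a constant map that is continuous for both the $18$- and the $26$-adjacency, so $MSS_{18}$ is $18$-contractible and $26$-contractible. The work that remains is the edge-by-edge continuity check, which I would organize by first tabulating the adjacencies of $MSS_{18}$ and then evaluating each edge under $G_1$, under $\rho$, and under the three maps comprising $K$.
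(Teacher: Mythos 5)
Your construction is correct and follows essentially the same route as the paper: an explicit multi-step contraction that first deforms $MSS_{18}$ into a connected subset of $S\cup S'$ (your intermediate map $G_1$ coincides exactly with the paper's time-$1$ map) and then completes the contraction inside that subset, verifying $18$- and $26$-continuity edge by edge. The only difference is organizational: the paper finishes in $3$ steps by collapsing $S$ to $p_6$ and $S'$ to $p_7$ and then merging, while your factorization through $Y=\rho(MSS_{18})$ uses $5$ steps.
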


\begin{proof}
We define a contraction $H: MSS_{18} \times [0,3] \to MSS_{18}$ as follows.
\begin{itemize}
    \item For the step at time $t=0$, we let $H(p_i,0) = p_i$ for all members of
          $\{i\}_{i=0}^9$.
    \item For the step $t=1$, we let
          \[ H(p_i,1) = \left \{ \begin{array}{ll}
             p_1 & \mbox{if } i \in \{0,1,9\}; \\
             p_6 & \mbox{if } i \in \{5,6\}; \\
             p_2 & \mbox{if } i \in \{2,3,8\}; \\
             p_7  & \mbox{if } i \in \{4,7\}. \\
          \end{array} \right .
          \]
          Thus, during this step, $H$ begins contracting $S$, deforming $S$ to
          $\{p_1,p_6\}$; and also begins
          contracting $S'$, deforming $S$ to $\{p_2,p_7\}$; as well as bringing $p_0$ to $p_1$ and $p_3$ to $p_2$.
    \item For the step $t=2$, let
           \[ H(p_i,2) = \left \{ \begin{array}{ll}
              p_6 & \mbox{if }  H(p_i,1) \in \{p_1,p_6\}; \\
              p_7 & \mbox{if } H(p_i,1) \in \{p_2,p_7\}.
            \end{array} \right .
          \]
          This step completes the contraction of $S$ to the point $p_6$; it also
          completes the contraction of $S'$ to the point $p_7$.
    \item For the step $t=3$, let $H(p_i)=p_6$ for all indices~$i$.
\end{itemize}
It is elementary to verify that $H$ is an 18-homotopy and a 26-homotopy
between the identity on $MSS_{18}$ and a constant map.
\end{proof}

Theorem~\ref{MSS18contracts} adds to our 
knowledge~\cite{Boxer99,Boxer06} of ``small" digital spheres that are 
digitally contractible. It seems likely that ``large" digital spheres are not
digitally contractible, although other than for digital 1-spheres, i.e., 
simple closed curves~\cite{Boxer10}, at the current writing
the literature lacks results to support this conjecture.

Note also that since a contractible digital image has trivial 
fundamental group (\cite{Boxer05} - proof corrected in
\cite{BxSt18}), the following assertion, originally appearing
as Propositions~3.3 and~3.5 of~\cite{BxSt16}, is an
immediate consequence of Theorem~\ref{MSS18contracts}.

\begin{cor}
Let $x \in MSS_{18}$. Then the fundamental groups $\Pi_1^{18}(MSS_{18},x)$
and $\Pi_1^{26}(MSS_{18},x)$ of $(MSS_{18},x)$ with respect to 18- and 26-adjacency,
respectively, are trivial. $\qed$
\end{cor}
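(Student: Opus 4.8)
The plan is to obtain the corollary as a direct application of Theorem~\ref{MSS18contracts} together with the general principle, recorded in \cite{Boxer05} (with the proof corrected in \cite{BxSt18}), that a $\kappa$-contractible digital image has trivial $\kappa$-fundamental group. Here the two relevant adjacencies are $\kappa = c_2$ (18-adjacency) and $\kappa = c_3$ (26-adjacency) on $\Z^3$, and Theorem~\ref{MSS18contracts} asserts precisely that $MSS_{18}$ is $c_2$- and $c_3$-contractible. So the entire content of the corollary is the step of feeding the contractions produced in Theorem~\ref{MSS18contracts} into the contractibility-implies-triviality implication.

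Carrying this out, I would fix $\kappa \in \{c_2, c_3\}$ and argue as follows. By Theorem~\ref{MSS18contracts} there is a $\kappa$-contraction $H \colon MSS_{18} \times [0,3]_{\Z} \to MSS_{18}$, i.e., a $(\kappa,\kappa)$-homotopy from the identity $1_{MSS_{18}}$ to a constant map. The cited principle then gives $\Pi_1^{\kappa}(MSS_{18}, x) = \{[c_x]\}$, the trivial group, where $c_x$ denotes the constant loop at the basepoint $x$: every $\kappa$-loop based at $x$ is pointed $\kappa$-homotopic to $c_x$, and concatenation of classes is therefore trivial. Applying this once for $\kappa = c_2$ and once for $\kappa = c_3$ yields the triviality of $\Pi_1^{18}(MSS_{18}, x)$ and of $\Pi_1^{26}(MSS_{18}, x)$ respectively, which is the assertion.

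The one point requiring care---and the reason the underlying general result needed the correction supplied in \cite{BxSt18}---is the treatment of the basepoint. The contraction $H$ need not fix $x$: the track $t \mapsto H(x,t)$ may move $x$ through $MSS_{18}$, so composing a loop $\gamma$ based at $x$ with the time-slices $H_t$ produces only a \emph{free} null-homotopy, whose intermediate stages are loops based at the moving point $H(x,t)$ rather than at $x$. To convert this into the \emph{pointed} null-homotopy needed for the fundamental group, one drags the basepoint back along the track $t \mapsto H(x,t)$, adjusting each time-slice so that it is a loop based at $x$. Spelling out this correction is exactly the content of the implication ``contractible $\Rightarrow$ trivial $\Pi_1$'' as repaired in \cite{BxSt18}; once that result is invoked, the corollary follows with no further computation and is an immediate consequence of Theorem~\ref{MSS18contracts}, the argument applying verbatim to both the $c_2$- and the $c_3$-adjacency.
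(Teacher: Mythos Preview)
Your proposal is correct and follows exactly the paper's approach: the corollary is stated as an immediate consequence of Theorem~\ref{MSS18contracts} together with the fact (from \cite{Boxer05}, corrected in \cite{BxSt18}) that a $\kappa$-contractible digital image has trivial $\kappa$-fundamental group. Your additional remarks about the basepoint issue simply elaborate on why the correction in \cite{BxSt18} was needed, but the argument itself matches the paper's one-line deduction.
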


\section{Further remarks}
We have corrected an error of~\cite{Han06} by showing that $MSS_{18}$ is
contractible with respect to 18-adjacency.


\end{document}